\newcommand{\sfrac}[2]{{\textstyle\frac{#1}{#2}}}
\newcommand{\Ex}{\mathbb{E}}
 \renewcommand{\Pr}{{\mathbb{P}}}
 \newcommand{\RR}{\mathcal R}
 \newcommand{\YY}{\mathcal Y}
\newcommand{\XX}{\mathcal X}
\newcommand{\TT}{\mathcal T}
\newtheorem{Lemma}{Lemma}
\newtheorem{Proposition}[Lemma]{Proposition}
 \newcommand{\qed}{\ \ \rule{1ex}{1ex}}
 \newcommand{\var}{\mathrm{var}}
 \newcommand{\cov}{\mathrm{cov}}
  \newcommand{\ball}{\mathrm{ball}}
\begin{document}

\title{Covering a compact space by fixed-radius or growing random balls}
 \author{David J. Aldous\thanks{Department of Statistics,
 367 Evans Hall \#\  3860,
 U.C. Berkeley CA 94720;  aldous@stat.berkeley.edu;
  www.stat.berkeley.edu/users/aldous.}
}

 \maketitle

 \begin{abstract}
 Simple random coverage models, well studied in Euclidean space, can also be defined on a general compact metric space. 
 By analogy with the geometric models, and with
 the discrete coupon collector's problem and with cover times for finite Markov chains,
 one expects a ``weak concentration" bound for the distribution of the cover time to hold under minimal assumptions.
 We give two such results, one for random  fixed-radius balls and the other for sequentially arriving randomly-centered and
 deterministically growing balls. 
 Each is in fact a simple application of a different more general bound, the former concerning coverage by 
 i.i.d. random sets with arbitrary distribution, and the latter concerning hitting times for Markov chains with a strong
 monotonicity property.
 The growth model seems generally more tractable, and we record some basic results and open problems for that model.
 \end{abstract}

\section{Introduction}
Analogs of the classical coupon collector's problem have been extensively studied in several different contexts.
One context is geometric: covering by (for instance) random balls in Euclidean space \cite{hall_book,penrose2021random}.  
Another context involves the time for an irreducible finite-state Markov
chain to visit every state.
Systematic study of that {\em cover time} $C_{MC}$, particularly for the case of random walks on graphs,
started in the 1980s \cite{me_covering}.
In any context, study of the expectation of the cover time (or more refined study of exact limit rescaled distributions) 
necessarily depends on the specifics of a model, and has been carried out via explicit calculations for many models.
However one expects that the  ``weak concentration" property of the coupon collector time $T_n$
(that s.d.$(T_n) /\Ex T_n \to 0$ as $n \to \infty$)
should extend quite generally to other cover time contexts, and should hold under minimal assumptions even when one cannot calculate the expectation explicitly.  
Indeed this is known to be true in the Markov chain context (see section \ref{sec:discuss}).
The purpose of this article  is to study one analog of geometric covering, in which the Euclidean space is replaced by a metric space.

Another part of our purpose is to spotlight two different general methods (known, but apparently not well known) 
for showing weak concentration in general 
settings without calculating the expectation of the cover time\footnote{Other than its order of magnitude.}.
In each of sections \ref{sec:fixed} and \ref{sec;grow}
we specify a model (fixed-radius or growing random balls), 
recall the relevant general method, and show that
a concentration bound is obtained very easily using that method.  
The growth model seems worthy of further study: we give some more basic results in
section \ref{sec:fa} and pose some challenging open problems.
The special case of the circle is outlined in section \ref{sec:circle}.
Further discussion of  models and methodology is deferred to section \ref{sec:discuss}.

\section{Covering with fixed radius random balls}
\label{sec:fixed}
Here we indicate how a concentration result for
covering, obtainable on Euclidean space in sharp form by explicit calculation \cite{hall_book},
can be extended to weak bounds in a very general setting.
Take a compact metric space $(S,\rho)$.
Let $\mu$ be a probability measure on $S$ with full support, and for $r > 0$ define
\[ \eta(r) := \inf_s \mu(  \ball (s, r)) > 0  \]
where $\ball(s,r) = \{s^\prime: \rho(s,s^\prime) \le r\}$.
Write $\sigma_1, \sigma_2, \ldots$ for i.i.d. random points of $S$ from distribution $\mu$.
For fixed $r_0 > 0$ consider the random subset
\[ \RR_n = \RR_n^{(r_0)} :=  \cup_{1 \le i \le n} \ball (\sigma_i, r_0) .
\]
We call this the {\em fixed-radius model}.
Consider the cover time
\begin{equation}
 C = C^{(r_0)} := \min \{n: \RR_n = S\} 
 \label{C:ball}
 \end{equation}
for which compactness easily implies $\Ex C < \infty$. 
The probability that a given point $s$ is in $\ball(\sigma_i,r_0)$ equals $\mu( \ball (s, r_0))$, and so
the mean time until point $s$ is covered equals $1/\mu(  \ball (s, r_0))$, which is at most
$1/\eta(r_0)$.   So to obtain a concentration result for $C$ a natural  assumption is that $\Ex C \gg 1/\eta(r_0)$,
in other words that $\eta(r_0) \Ex C$ is large.
Our result below is of that general form, but also involves
the {\em dimension}-related quantity $d(r)$ defined as the smallest integer such that 
\begin{equation}
\mbox{ each ball of radius $r$ can be covered by $d(r)$ balls of radius $r/2$. }
\label{dr}
\end{equation}

\begin{Proposition}
\label{P:fixed}
In the fixed-radius model, for the cover time $C$ at (\ref{C:ball}),
\[ \var \left(  \frac{C}{\Ex C} \right) \leq 
\kappa \, \frac{d(r_0)}{\eta(r_0/2)    \Ex C}
\]
for the absolute constant $\kappa$ stated in Proposition \ref{P1} below.
\end{Proposition}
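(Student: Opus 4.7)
The plan is to apply the general i.i.d.-coverage variance bound of Proposition~\ref{P1} after reducing continuous coverage of $S$ to coverage of a finite collection of small ``witness balls.'' Concretely, I would fix a minimal cover of $S$ by balls of radius $r_0/2$, say $\{\ball(s_j, r_0/2) : j = 1, \ldots, N\}$, and let $T_j = \min\{i : \sigma_i \in \ball(s_j, r_0/2)\}$. The triangle inequality gives $\ball(s_j, r_0/2) \subseteq \ball(\sigma_i, r_0)$ whenever $\sigma_i \in \ball(s_j, r_0/2)$, so once every witness ball has been hit, $\RR_n \supseteq S$; in particular $C \le T^* := \max_j T_j$. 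This identifies the finite witness structure that Proposition~\ref{P1} will operate on.

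Next I would bound the two parameters of this witness system. Each witness is hit by a single sample with probability $\mu(\ball(s_j, r_0/2)) \ge \eta(r_0/2)$. For the number $N$ of witnesses, a maximal packing of $S$ by pairwise-disjoint $r_0/2$-balls contains at most $1/\eta(r_0/2)$ balls (each has $\mu$-mass at least $\eta(r_0/2)$), and doubling their radii produces a cover of $S$ by at most $1/\eta(r_0/2)$ balls of radius $r_0$ (else maximality of the packing would fail). Refining each such $r_0$-ball into $d(r_0)$ balls of radius $r_0/2$, via the definition of the doubling constant~(\ref{dr}), gives $N \le d(r_0)/\eta(r_0/2)$. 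Plugging these two inputs into Proposition~\ref{P1} should directly produce the stated inequality, with the same absolute constant $\kappa$.

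The main obstacle lies not in this reduction---which is the routine geometric part---but in Proposition~\ref{P1} itself, which must convert a maximum of $N$ positively-correlated geometric-type hitting times into a variance estimate of the correct coupon-collector-like order; the correlation comes from the fact that a single sample $\sigma_i$ can simultaneously lie in several overlapping witness balls. A secondary bookkeeping issue is that the witness argument yields only $C \le T^*$ rather than equality, so one should verify that Proposition~\ref{P1} is phrased so that the true cover-time mean $\Ex C$ (rather than $\Ex T^*$) appears in the denominator of its output; the expected formulation would apply the bound directly to the cover time governed by the witness system, making this substitution automatic.
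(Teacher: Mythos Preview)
Your reduction to a finite witness system is not how Proposition~\ref{P1} is meant to be used here, and it creates two genuine gaps.

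First, Proposition~\ref{P1} does not take ``$N$ items, each hit with probability at least $p$'' as its inputs; its right-hand side is $\kappa\,\Ex c(\TT)/\Ex C_{set}$, where $\TT$ is the terminal (last-uncovered) set. The paper applies Proposition~\ref{P1} directly to the continuous cover time $C$ itself (noting the result extends verbatim to the compact space $S$), and the entire geometric work is to bound $\Ex c(\TT)$. That bound is immediate once one observes that the terminal set satisfies $\TT\subseteq \ball(\sigma_C,r_0)$, since the single ball added at time $C$ must finish the cover. Then any $\ball(s,r_0)$ is covered as soon as each of its $d(r_0)$ sub-balls of radius $r_0/2$ contains some center $\sigma_i$; the expected waiting time for each sub-ball is at most $1/\eta(r_0/2)$, and since $\max\le\sum$ for nonnegative reals one gets $c(\TT)\le d(r_0)/\eta(r_0/2)$. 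No global witness cover of $S$, no bound on its cardinality $N$, and no packing argument are needed.

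Second, even if you did apply Proposition~\ref{P1} to your finite witness system, the cover time of that system is $T^*$, not $C$. You obtain only $C\le T^*$, and a bound on $\var(T^*/\Ex T^*)$ does not in general control $\var(C/\Ex C)$: for random variables with $C\le T^*$ a.s.\ one can have $\var C>\var T^*$. Your closing remark that ``this substitution is automatic'' is therefore unjustified. Moreover, to run Proposition~\ref{P1} on the witness system you would still need to bound $\Ex c(\TT_{\mathrm{finite}})$ for its terminal set of witness indices; your parameters $N$ and $\eta(r_0/2)$ do not directly give that, and chasing it through your packing-then-refinement construction is messier than the paper's one-line observation about $\TT$.
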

We will derive Proposition \ref{P:fixed} from a known general result, discussed as Proposition \ref{P1} below.

 \subsection{The random subset cover bound}
Here we copy the setup and result directly from  \cite{me_threshold}.
Let $S_0$ be a finite set. Let $\YY$ be a random subset of $S_0$, whose distribution is arbitrary subject to the requirement
\begin{equation}
\mbox{$\Pr(s \in \YY) > 0$ for each $s \in S_0$.}
\label{PY}
\end{equation}
Let $\YY_1, \YY_2, \ldots$ be independent random subsets distributed as $\YY$. Let $\RR_n$ be
the range of this process:
$\RR_n = \cup_{i \le n}   \YY_i$    and let $C_{set}$ be the cover time
\[
C_{set} := \min \{n: \RR_n = S_0\}.
\]
Note $\Ex C_{set} < \infty$ by (\ref{PY}) and finiteness of $S_0$.
For any non-random subset $B \subset S_0$ let $c(B)$ be the mean cover time of $B$:
\[ c(B) := \Ex C(B); \quad C(B):= \min \{n: \RR_n \supseteq B\} . \]
Our bound involves the {\em terminal set }
\[ 
\TT := S_0 \setminus \RR_{C_{set} - 1}
\]
that is the last uncovered portion of $S_0$.

\begin{Proposition}[\cite{me_threshold} Theorem 1]
\label{P1}
$ \var \left(  \frac{C_{set}}{\Ex C_{set}} \right) \leq 
\kappa \, \frac{\Ex c(\TT)}{\Ex C_{set}}$
for an absolute constant $\kappa$.
\end{Proposition}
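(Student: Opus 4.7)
My approach rests on the strong Markov / restart property. Since $\YY_1,\YY_2,\ldots$ are i.i.d., for any deterministic $n \geq 0$, conditional on $\mathcal{F}_n := \sigma(\YY_1,\ldots,\YY_n)$ and on $\{C_{set} > n\}$, the residual cover time $C_{set} - n$ is distributed as an independent copy of $C(\TT_n)$ with $\TT_n := S_0 \setminus \RR_n$. Taking expectations (with $c(\emptyset):=0$) yields the clean identity
\[
g(n) \;:=\; \Ex c(\TT_n) \;=\; \Ex[(C_{set}-n)^+] \;=\; \sum_{k \geq n}\Pr(C_{set}>k),
\]
so $g$ is convex and non-increasing with $g(0)=\Ex C_{set}$ and $g(\infty)=0$; a further conditional Markov step gives the tail estimate $\Pr(C_{set} > n+k) \leq g(n)/k$ for all $n,k \geq 0$.

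Using the standard expansions $\Ex C_{set}=\sum_n\Pr(C_{set}>n)$ and $\Ex C_{set}^2=2\sum_n n\Pr(C_{set}>n)+\Ex C_{set}$, I rewrite the variance as
\[
\var(C_{set}) \;=\; 2\sum_{n\geq 0} g(n) - \Ex C_{set} - (\Ex C_{set})^2,
\]
so the proposition becomes equivalent to a bound of the form $\sum_n g(n) \leq \tfrac12 (\Ex C_{set})^2 + \kappa'\,\Ex C_{set}\,\Ex c(\TT)$ for an absolute $\kappa'$. Intuitively, this says the convex function $g$ must ramp down from $\Ex C_{set}$ to zero inside a window of width $O(\Ex c(\TT))$ near $n\approx \Ex C_{set}$.

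To connect $\Ex c(\TT)$ with this ramp-down width, I decompose $\Ex c(\TT)=\Ex c(\TT_{C_{set}-1})$ over the time step at which $\TT$ is covered for the first time; using independence of $\YY_{n+1}$ from $\mathcal{F}_n$ one obtains
\[
\Ex c(\TT) \;=\; \sum_{n\geq 0}\Ex\bigl[c(\TT_n)\,p(\TT_n)\,\mathbf{1}_{C_{set}>n}\bigr],
\]
where $p(B):=\Pr(\YY_1\supseteq B)$. The elementary inequality $c(B)p(B)\leq 1$ (which follows from dominating $C(B)$ by a geometric random variable of parameter $p(B)$) reconfirms $\Ex c(\TT)\leq \Ex C_{set}$ but is not sharp enough by itself; its decisive use is as a quantitative measure of how strongly $g$ falls off past the bulk of the sum.

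The main obstacle, as I see it, is promoting this into an upper tail estimate of the form $\Pr(C_{set}>\Ex C_{set}+t)\leq O(\Ex c(\TT))/t$, rather than the unhelpful Markov bound with $\Ex[(C_{set}-\Ex C_{set})^+]/t$ in the numerator. I would apply $\Pr(C_{set}>n+k)\leq g(n)/k$ with the restart time $n$ chosen adaptively (for instance $n=\Ex C_{set}-\alpha t$ for a small constant $\alpha$) and use convexity of $g$ together with the decomposition of the previous paragraph to bound $g(n)$ from above in terms of $\Ex c(\TT)$. The lower tail is easy, since $(C_{set}-\Ex C_{set})^-\leq \Ex C_{set}$ and $\Ex(C_{set}-\Ex C_{set})^-=\Ex(C_{set}-\Ex C_{set})^+$. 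Substituting both tails into the variance formula then yields the advertised bound with an absolute constant $\kappa$.
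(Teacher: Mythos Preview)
The paper does not prove this proposition; it is quoted as Theorem~1 of \cite{me_threshold} and used as a black box, so there is no in-paper argument to compare against. I therefore assess your outline on its own merits.

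Your preliminary identities are correct: the restart formula $g(n)=\Ex[(C_{set}-n)^+]=\Ex[c(\TT_n)\mathbf 1_{\{C_{set}>n\}}]$, the convexity of $g$, the bound $\Pr(C_{set}>n+k)\le g(n)/k$, and the decomposition $\Ex c(\TT)=\sum_{n\ge 0}\Ex[c(\TT_n)\,p(\TT_n)\,\mathbf 1_{\{C_{set}>n\}}]$ all hold. The gap is precisely at the step you yourself label ``the main obstacle.'' You propose to bound $g(n)$, for $n$ near $\Ex C_{set}$, by a constant multiple of $\Ex c(\TT)$, invoking ``convexity of $g$ together with the decomposition,'' but no argument is actually given, and I do not see how one goes. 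The decomposition carries the extra factor $p(\TT_n)$, which may be arbitrarily small, so it does not lower-bound $\Ex c(\TT)$ by any fixed fraction of a single $g(n)=\Ex[c(\TT_n)\mathbf 1_{\{C_{set}>n\}}]$; and convexity only compares values of $g$ to one another, not to $\Ex c(\TT)$. Since bounding $g(\lfloor\Ex C_{set}\rfloor)$ by $O(\Ex c(\TT))$ is essentially equivalent to the concentration statement you are trying to prove, the outline is circular at its crux.

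There is a secondary problem. Even granting the target tail bound $\Pr(C_{set}>\Ex C_{set}+t)\le K\,\Ex c(\TT)/t$, a $1/t$ tail does not integrate against $2t\,dt$ to give a finite contribution to $\Ex[((C_{set}-\Ex C_{set})^+)^2]$. You would need a further iteration of the restart bound, or a combination with the submultiplicative exponential tail together with a check that the far tail contributes only $O(\Ex c(\TT)\,\Ex C_{set})$; neither is automatic and neither is in the proposal. In short, the outline assembles sound ingredients but stops exactly where the substantive argument of \cite{me_threshold} begins.
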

Though stated in \cite{me_threshold} for a finite state space $S_0$, Proposition \ref{P1} extends to  continuous space, in particular
our compact metric space $S$, with unchanged proof, except that now  we need to 
replace assumption (\ref{PY}) by the assumption $\Ex C_{set} < \infty$.

Of course it may be difficult to analyze $\TT$, and so one does not expect to obtain sharp bounds on specific models in this way.
But Proposition \ref{P1} may be useful in obtaining order of magnitude bounds in general settings.
In particular if there is some geometric or metric structure on the set and if the random subsets $\YY$ are small in diameter, then $\TT$ must be small in diameter, so one needs only to bound $c(B)$ as a function of the diameter of $B$.
The next section gives a simple illustration of that method.

\subsection{Proof of Proposition \ref{P:fixed}}
In the notation of Proposition \ref{P1},  the terminal set $\TT$ is such that $\TT \subset \ball (s, r_0)$ for some $s \in S$, so  
\[ c(\TT) \le \sup_s \Ex C(  \ball (s, r_0)). \]
The mean time until one of the random centers $\sigma$ falls in a given ball of radius $r_0/2$ is at most 
$1/\eta(r_0/2)$.
Note that a ball of radius $r_0/2$ is covered by any ball of radius $r_0$ whose center is in the former ball.
So from the definition of dimension $d$, for each $s$ there are $d$ points $s_1,\ldots, s_d$ such that 
$\ball (s, r_0)$ is covered whenever each of $( \ball (s_i, r_0/2), 1 \le i \le d)$ contains at least one of the random centers $\sigma$, and so
\[ \sup_s \Ex C(  \ball (s, r_0)) \le d/\eta(r_0/2) . \]
The result follows from Proposition \ref{P1}.

\section{The growth model} 
\label{sec;grow}
Consider as before a compact metric space $(S,\rho)$, a probability measure $\mu$ on $S$, but now introduce two rates 
$0 < \lambda < \infty$ and $0 < v < \infty$.
Write 
$0 < \tau_1 < \tau_2 < \ldots$ for the times of a rate-$\lambda$ Poisson process, 
and write $\sigma_1, \sigma_2, \ldots$ for i.i.d. random points of $S$ from distribution $\mu$.
The verbal description
\begin{quote}
seeds arrive at times of a Poisson process at i.i.d. random positions, and then create balls whose
radius grows at rate $v$
\end{quote}
is formalized as the set-valued {\em growth process}
\begin{equation}
\XX(t) := \cup_{i: \tau_i \le t} \ \ball \, (\sigma_i, v(t - \tau_i)).
\label{XXdef}
\end{equation}
We study the cover time
\[
C := \min  \{t: \ \XX(t) = S \}
\]
which is finite because $\Ex \tau_1= 1/\lambda$ and so
\begin{equation}
1/\lambda \le \Ex C \le 1/\lambda + \Delta/v
\label{CD}
\end{equation}
where $\Delta$ is the diameter of $S$. 
To obtain a concentration bound 
it is natural to require that $\Ex C$ is large relative to the maximum expected time to cover any 
given single point, that is relative to 
\[
c^* := \max_{s \in S} \Ex C(s); 
\quad
C(s) := \min  \{t: \ s \in \XX(t)\} .
\]
It turns out this is the only requirement.
\begin{Proposition}
\label{P:growth}
In the growth model (\ref{XXdef}),
$\var \left( \frac{C}{\Ex C} \right) \le \frac{c^*}{\Ex C}$.
\end{Proposition}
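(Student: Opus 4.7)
The plan is to establish the equivalent inequality $\Ex C^2 \le (\Ex C)^2 + c^* \Ex C$, from which $\var(C) \le c^* \Ex C$ follows immediately. (As a sanity check this implies $\Ex C \ge c^*$, which is also immediate from $C \ge C(s)$ pointwise.)

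The key structural fact I would exploit is the strong monotonicity of the growth process arising from the independent-increments property of the Poisson seed process. For each $t \ge 0$, define the shifted-restart process
\[
\tilde \XX(s) := \bigcup_{i:\, t < \tau_i \le t+s}  \ball(\sigma_i, v(t+s-\tau_i)), \quad s \ge 0,
\]
which has the same law as $\XX$ and is independent of the sigma-algebra $\mathcal F_t$ of events up to time $t$. Because the ``old'' balls from seeds $\tau_i \le t$ only continue to grow after $t$, we have $\tilde \XX(s) \subseteq \XX(t+s)$ for all $s \ge 0$. Writing $\tilde C$ for the cover time of $\tilde \XX$, this yields the stochastic domination $(C-t)_+ \le \tilde C$ conditional on $\mathcal F_t$, and in particular $\Pr(C > t+s) \le \Pr(C > t) \Pr(C > s)$. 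The same argument applied to a single point $s$ gives $\Ex[C(s) - t \mid C(s) > t, \mathcal F_t] \le \Ex C(s) \le c^*$.

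Starting from the standard identity $\Ex C^2 = 2\int_0^\infty \Ex(C-t)_+ \, dt$, the global domination above yields only the weaker $\Ex C^2 \le 2(\Ex C)^2$, corresponding to $\var C \le (\Ex C)^2$. The improvement to $c^* \Ex C$ requires using the single-point bound instead: on $\{C > t\}$ one has $(C-t)_+ = C(s^*) - t$ for the random last-uncovered point $s^*$, and the single-point residual cover time has conditional expectation $\le c^*$ for each fixed $s$ by the calculation above. The main obstacle I anticipate is converting this pointwise statement into a bound on the residual cover time of the \emph{particular} last-uncovered point $s^*$, which is where the strong-monotonicity hypothesis of the general Markov-chain bound alluded to in the introduction is essential --- presumably via a Palm / Slivnyak--Mecke argument on the Poisson seed process selecting a canonical ``last seed'' whose contribution can be controlled in terms of $c^*$, or by finite approximation followed by a limiting argument.
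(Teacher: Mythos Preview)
Your proposal has a genuine gap that you yourself flag but do not close. The route via
$\Ex C^2 = 2\int_0^\infty \Ex(C-t)_+\,dt$ and the claim ``on $\{C>t\}$ one has $(C-t)_+ = C(s^*)-t$'' is correct, but $s^*$ is determined by the \emph{future} of the process, so the single-point residual bound $\Ex[C(s)-t\mid C(s)>t,\mathcal F_t]\le c^*$ (valid for each \emph{fixed} $s$) does not transfer to $s^*$. Your suggested Palm/Mecke fix is only a hope; no mechanism is given for why conditioning on ``$s$ is last covered'' would preserve the $\le c^*$ bound, and in general such conditioning biases the seed process unfavorably.

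The paper's proof avoids residual times altogether. It invokes the general monotonicity bound (Proposition~\ref{L-simple-1}): for a Markov process with $h(x)=\Ex_x C$ nonincreasing along transitions, $\var(C)/\Ex C$ is bounded by the maximal drop $h(x)-h(x')$ over possible transitions $x\to x'$. Here the only discontinuous transitions are seed arrivals $x\to x\cup\{\sigma\}$, so the task reduces to proving
\[
h(x)-h(x\cup\{\sigma\})\le \Ex C(\sigma)\le c^*.
\]
This is done by a concrete coupling: run the same future seeds from both initial configurations, so that $\XX'(t)=\XX(t)\cup\ball(\sigma,vt)$. Let $C^*(\sigma)$ be the time a \emph{fresh} seed's ball first reaches $\sigma$; by the triangle inequality in $S$, for all $t\ge 0$ one has $\XX(C^*(\sigma)+t)\supseteq \XX'(t)$, hence the two cover times differ by at most $C^*(\sigma)\stackrel{d}{=}C(\sigma)$. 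This coupling/triangle-inequality step is the missing idea: it converts the single-point quantity $c^*$ into a bound on the effect of one added seed on the expected \emph{global} cover time, which is exactly the bridge your argument lacks.
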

We will derive Proposition \ref{P:growth} from a known general result, discussed as Proposition \ref{L-simple-1} below.
Note that the expectation of the number of balls covering $v$ at time $t$ equals
$\int_0^t \mu(\ball(s,vu)) \   \lambda du    $
and so from the Poisson property
\begin{equation}
\Pr(C(s) > t) = \exp \left( -    \int_0^t \mu(\ball(s,vu)) \   \lambda du        \right) 
\label{PCs}
\end{equation}
from which we can in principle obtain a formula for $\Ex C(s)$.

\subsection{A monotonicity bound for Markov chains}
\label{sec:monot}
Here we copy the setup and result directly from  \cite{me-weakC}.
The setting there is a continuous-time Markov chain $(X_t)$ on a finite state space $\Sigma$, 
where we study the hitting time 
\begin{equation}
 T := \inf \{t: \ X_t \in \Sigma_0\} 
 \label{T-def}
 \end{equation}
for a fixed subset $\Sigma_0 \subset \Sigma$.  
Assume
\begin{equation}
\label{h-def}
 h(x) := \Ex_x T < \infty \mbox{ for each } x \in \Sigma 
 \end{equation}
which holds in the finite case under the natural ``reachability" condition.
Assume also a rather strong ``monotonicity" condition:
\begin{equation}
h(x^\prime) \le h(x) \mbox{ whenever $x \to x^\prime$ is a possible transition}. 
\label{def-monotone}
\end{equation}
\begin{Proposition}[\cite{me-weakC}]
\label{L-simple-1}
Under conditions (\ref{h-def}, \ref{def-monotone}), for any initial state,
\[ \frac{\var  \ T}{\Ex  T} \le  \max \{ h(x) - h(x^\prime)    : \ x \to x^\prime \mbox{ a possible transition} \} .
\]
\end{Proposition}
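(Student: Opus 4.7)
My plan is to build a natural martingale from the hitting-time function $h$, express $\var T$ as the expectation of that martingale's predictable quadratic variation, and use the monotonicity hypothesis (\ref{def-monotone}) to bound the integrand by comparison with the integrand that delivers $\Ex T$ itself.

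By the Markov property and (\ref{h-def}), the function $h$ solves the Poisson equation $(Qh)(x) = -1$ for $x \in \Sigma \setminus \Sigma_0$ with $h \equiv 0$ on $\Sigma_0$, where $Q$ is the generator of $(X_t)$. Hence
\[ M_t := h(X_{t \wedge T}) + (t \wedge T) \]
is a martingale started at $h(X_0)$, and because $h(X_T)=0$ we have $M_T = T$, so $\var T = \Ex[(M_T - \Ex T)^2]$. The standard compensator identity for pure-jump martingales on a finite state space then gives
\[ \var T \; = \; \Ex \int_0^T \sum_{x^\prime} Q(X_s, x^\prime) \, (h(x^\prime) - h(X_s))^2 \, ds. \]
Writing $M^*$ for the right-hand side of the claimed inequality, (\ref{def-monotone}) forces $0 \le h(X_s) - h(x^\prime) \le M^*$ for every admissible transition, so
\[ (h(x^\prime) - h(X_s))^2 \le M^* \cdot (h(X_s) - h(x^\prime)). \]
Summing against the off-diagonal rates $Q(X_s,x^\prime) \ge 0$ and using $(Qh)(X_s) = -1$ while $X_s \notin \Sigma_0$, the integrand is uniformly at most $M^*$, so $\var T \le M^* \, \Ex T$, which is the statement.

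The one nontrivial bookkeeping step is justifying the compensator identity with the possibly unbounded stopping time $T$ when only $\Ex T < \infty$ is assumed. On a finite state space, however, the jumps of $M$ are bounded by $M^*$ (by monotonicity again), so truncation at $T \wedge n$ followed by monotone (or dominated) convergence handles the passage, and this is not a genuine obstacle. The real content of the argument is the key inequality $(h(x^\prime) - h(X_s))^2 \le M^* (h(X_s) - h(x^\prime))$: it collapses the second-moment integrand into $M^*$ times the first-moment integrand $-(Qh) \equiv 1$, which is precisely how one converts a variance bound into a mean bound. I expect the writing of the argument to lean entirely on this one-line observation, with everything else being either the Poisson-equation computation or the standard martingale machinery.
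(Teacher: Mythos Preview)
The paper does not actually prove Proposition~\ref{L-simple-1}; it is quoted from \cite{me-weakC} and used as a black box, with only the remark that the proof carries over unchanged to continuous state space. So there is no ``paper's own proof'' to compare against here.

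That said, your argument is correct and is essentially the proof given in the cited source. The martingale $M_t = h(X_{t\wedge T}) + (t\wedge T)$, the identification $\var T = \Ex\langle M\rangle_T$, and the pointwise bound $(h(x')-h(x))^2 \le M^*\,(h(x)-h(x'))$ which collapses the quadratic-variation integrand to $M^*\cdot(-Qh)(X_s)=M^*$ are exactly the ingredients. Your handling of the integrability issue is also the right one: the quadratic-variation integrand is bounded by $M^*$, so $\Ex\langle M\rangle_T \le M^*\,\Ex T < \infty$, which makes the stopped martingale $L^2$-bounded and justifies passing to the limit from $T\wedge n$.
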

Though stated in  \cite{me-weakC} for a finite state space $\Sigma$, Proposition \ref{L-simple-1} extends to  continuous space with essentially unchanged proof.

\subsection{Proof of Proposition \ref{P:growth}}
\label{sec:PPg}
The cover time $C$ for our growth model $\XX(t)$ at (\ref{XXdef}) is of the form in Proposition \ref{L-simple-1}; 
the state space is the space of compact subsets $x$ of the compact metric space $S$.
The only discontinuities of $h(\XX(t))$ are at a time $\tau$ when a new seed arrives at a point $\sigma$,
at which time there is a transition $x \to x \cup \{\sigma\}$ of $\XX(t)$.
To apply Proposition \ref{L-simple-1} to prove Proposition \ref{P:growth} it is enough to show 
that, for each pair $(x,\sigma)$,
\begin{equation}
h(x) - h(x \cup \{\sigma\}) \le \Ex C(\sigma) .
\label{hhx}
\end{equation}
But this holds by considering the natural coupling $(\XX(t), \XX^\prime(t) = \XX(t) \cup \ball(\sigma,vt)  , t \ge 0)$ 
of the growth processes with $\XX(0) = x, \XX^\prime(0) = x \cup \{ \sigma\}$.
In this coupling, for the time $C^*(\sigma)$ at which $\sigma$ is reached by a ball of $\XX(\cdot)$ 
whose seed arrived after time $0$, we have 
(by the triangle inequality on $S$)
that $\XX( C^*(\sigma) + t) \supseteq \XX^\prime (t)$,
and so the cover times for these two processes differ by at most $C^*(\sigma)$. 
But this  $C^*(\sigma)$ is distributed as $C(\sigma)$ for the growth process started at the empty set,
establishing (\ref{hhx}).

\section{Further analysis of the general growth model}
\label{sec:fa}
Comparing the statements of Propositions \ref{P:fixed} and \ref{P:growth} suggests that the growth model
is more tractable for the study of covering.
Intuitively this is because the behavior of the growth model is ``smoother" in that it does not rely on the detailed geometry of the space $(S,\rho)$ at the given distance $r_0$.
In this section we record some simple observations and then pose some open problems.

We can ``standardize" the growth model by choosing time and distance units to make $\lambda = v = 1$.
With this standardization we have a relationship between the diameter $\Delta$ and $\Ex C$.
\begin{Proposition}
\label{ECD}
In the standardized growth model on a space $(S,\rho)$,\\
(a) $\Ex C \le 1 + \Delta$.\\
(b) If $S$ is connected then $\Delta \le \kappa_1 (\Ex C)^2$ for an absolute constant $\kappa_1$.
\end{Proposition}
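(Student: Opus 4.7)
\textbf{Part (a)} is essentially a restatement of the upper bound already recorded in (\ref{CD}): setting $\lambda = v = 1$ in $\Ex C \le 1/\lambda + \Delta/v$ gives $\Ex C \le 1 + \Delta$, with nothing further to prove.

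\textbf{Part (b)} is the substantive half. Since $C \ge C(s)$ for every $s$, we have $\Ex C \ge c^*$, and it therefore suffices to exhibit a single point $s^*$ with $\Ex C(s^*) \ge c\sqrt{\Delta}$ for an absolute constant $c>0$. My plan is to find a point $s^*$ whose small neighborhoods carry little $\mu$-mass, and then to plug this into the explicit formula (\ref{PCs}) for $\Pr(C(s^*) > t)$.

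To produce $s^*$, fix an integer $k \ge 1$ (ultimately of order $\sqrt{\Delta}$) and take $s_1, s_2 \in S$ at distance $\Delta$. Using compactness and connectedness, the continuous map $s \mapsto \rho(s, s_1)$ sends $S$ onto the interval $[0,\Delta]$, so I can select $y_0 = s_1, y_1, \ldots, y_k$ with $\rho(y_i, s_1) = i\Delta/k$. The reverse triangle inequality then gives $\rho(y_i, y_j) \ge |i-j|\Delta/k \ge \Delta/k$ for $i \neq j$, so the $k+1$ closed balls $\ball(y_i, \Delta/(3k))$ are pairwise disjoint; since their $\mu$-measures sum to at most $1$, some $y_{i^*} =: s^*$ satisfies $\mu(\ball(s^*, \Delta/(3k))) \le 1/(k+1)$. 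Feeding this into (\ref{PCs}) with $\lambda = v = 1$ gives, for $t \le \Delta/(3k)$, $\Pr(C(s^*) > t) \ge \exp(-t/(k+1))$, so
\[
\Ex C(s^*) \ge \int_0^{\Delta/(3k)} e^{-t/(k+1)}\,dt = (k+1)\bigl(1 - e^{-\Delta/(3k(k+1))}\bigr).
\]
Choosing $k = \lceil \sqrt{\Delta}\,\rceil$ makes $\Delta/(3k(k+1))$ a bounded positive quantity and yields $\Ex C(s^*) \ge c\sqrt{\Delta}$ for large $\Delta$; the trivial bound $\Ex C \ge 1$ covers small $\Delta$, and after absorbing constants one obtains $\Delta \le \kappa_1 (\Ex C)^2$.

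The only step genuinely using connectedness, and the one I expect to require the most care to state cleanly, is the chain-of-points construction via the distance-to-$s_1$ function; the rest is routine estimation and optimization in $k$.
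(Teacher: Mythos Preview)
Your proof is correct, but the approach is quite different from the paper's.

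For part (b) the paper argues globally: at the cover time $C$ the balls cover $S$, so by connectedness the sum of their diameters, $D(C)=2\int_0^C N(u)\,du$ with $N$ the Poisson counting process, is at least $\Delta$. Taking expectations and applying Cauchy--Schwarz gives $\Delta \le 2\int_0^\infty (t^2+t)^{1/2}\sqrt{\Pr(C\ge t)}\,dt$, and then the submultiplicativity of $C$ is used to get an exponential tail $\Pr(C\ge t)\le \exp(1 - t/(e\,\Ex C))$, from which the $(\Ex C)^2$ bound follows by calculus.

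Your route is more elementary and in a sense sharper: you produce, via pigeonhole on $k+1$ well-separated points (obtained from level sets of $\rho(\cdot,s_1)$), a single point $s^*$ with $\mu(\ball(s^*,\Delta/(3k)))\le 1/(k+1)$, and then read off $\Ex C(s^*)\ge c\sqrt{\Delta}$ directly from~(\ref{PCs}). This actually proves the stronger inequality $c^*\ge c\sqrt{\Delta}$, not merely $\Ex C\ge c\sqrt{\Delta}$, and it bypasses both Cauchy--Schwarz and the tail bound~(\ref{submult}). On the other hand, the paper's argument has the advantage that the submultiplicative tail bound~(\ref{submult}) it establishes along the way is a useful structural fact about $C$ that gets reused later, and its key geometric step (sum of diameters of a cover of a connected space dominates the diameter) is a clean one-line observation once stated.
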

\begin{proof}
Part (a) is (\ref{CD}).  
For (b), at time $t$ the sum of diameters of balls is at most
\[ D(t) := 2 \sum_i (t - \tau_i)^+ .\]
By connectedness we must have
\[ \Delta \le D(C) . \]
We can rewrite $D(t)$ in terms of the Poisson counting process $(N(t))$ as 
$D(t) = 2 \int_0^t N(u) du$ and then
\[ \Delta \le \Ex D(C) = 2 \int_0^\infty \Ex [N(t) 1_{(t \le C)} ] \ dt . \]
Using the Cauchy-Schwarz inequality
\begin{equation}
 \Delta \le 2 \int_0^\infty (t^2 + t)^{1/2} \ \sqrt{\Pr(C \ge t)} \ dt .
\label{Dint}
\end{equation}
Now the obvious submultiplicative property of the cover time $C$, that is
\[ \Pr(C \ge t_1 + t_2) \le \Pr(C \ge t_1) \ \Pr(C \ge t_2) \]
combined with Markov's inequality $\Pr(C \ge e \Ex C) \le e^{-1}$
implies an exponential tail bound
\begin{equation}
 \Pr ( C \ge t) \le \exp( 1 - \sfrac{t}{e \Ex C} ) 
 \label{submult}
 \end{equation}
and the result follows from (\ref{Dint}) and straightforward calculus bounds 
(note $\Ex C \ge 1$ from (\ref{CD})).
\qed
\end{proof}

Continuing with this standardization, consider a sequence of connected compact metric spaces $S = S^{(n)}$ and probability distributions $\mu = \mu^{(n)}$.
Proposition  \ref{P:growth} implies that as $n \to \infty$
\begin{equation}
\mbox{ if } \sfrac{c^*}{\Ex C} \to 0 \mbox{ then } \sfrac{C}{\Ex C} \to 1 \mbox{ in } L^2 .
\label{cC}
\end{equation}
Can we relate the  hypothesis  $c^*/\Ex C \to 0$ to other aspects of the spaces?
Recall that $c^*$ is in principle directly calculable from (\ref{PCs}), whereas
determining whether $\Ex C$ is of the same order, or larger order, than $c^*$ requires some more 
detailed knowledge of the space $S$.

If the diameters $\Delta^{(n)}$ are bounded (as $n$ increases) then by Proposition \ref{ECD} the mean cover times 
$\Ex C^{(n)}$ are bounded; because $\Pr(C^{(n)} > t) \ge \exp(-t)$ the conclusion (and hence the assumption) 
of (\ref{cC}) is false.  
So we need study only the case $\Delta^{(n)} \to \infty$.
Here is a simple example to show that the conclusion of (\ref{cC}) is not always true.

{\bf Example.}
Take $S^{(n)}$ to be the real line segment $[0,n]$ and $\mu^{(n)}(0) = 1 - 1/n$ and $\mu^{(n)}(n) = 1/n$.
One easily sees that 
\[ n^{-1} C^{(n)} \to_d \min(1, \sfrac{1}{2}(1+\xi) ) \]
where $\xi$ has Exponential(1) distribution.

In an opposite direction, we note a simple upper bound on $\Ex C/c^*$, that is a lower bound on $c^*/\Ex C$,
in terms of the {\em covering number}
\begin{equation}
 \cov(r) := \mbox{ minimum number of radius $r$ balls that cover $S$ }. 
 \label{def:cov}
 \end{equation}
\begin{Proposition}
In the standardized growth model,
\[ \frac{\Ex C}{c^*} \le \min_{a > 0}  [a + e(e + \log \cov(ac^*) ) ]        .\]
\end{Proposition}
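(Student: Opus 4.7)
The strategy is to reduce the full cover time $C$ to the maximum of the single-point cover times $C(s_i)$ at a finite set of well-chosen centers, and then bound that maximum via a union bound together with an exponential tail estimate for each $C(s)$. Fix $a > 0$, let $r = ac^*$ and $N = \cov(r)$, and choose centers $s_1,\dots,s_N$ such that the balls $\ball(s_i, r)$ cover $S$.

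The first key step is the deterministic sample-path inequality
\[ C \le \max_{1 \le i \le N} C(s_i) + r. \]
Indeed, suppose $s_i$ is first covered at time $C(s_i)$ by the ball from seed $(\tau_j,\sigma_j)$, so that $\rho(s_i,\sigma_j) \le C(s_i) - \tau_j$ (using $v=1$). For any $s \in \ball(s_i, r)$, the triangle inequality gives $\rho(s,\sigma_j) \le r + C(s_i) - \tau_j$, so $s$ is swept up by that same growing ball by time $C(s_i) + r$. Since the $\ball(s_i,r)$ cover $S$, the claim follows.

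Next I would establish an exponential tail for each $C(s)$. Formula (\ref{PCs}) immediately gives the submultiplicativity
\[ \Pr(C(s) > t_1 + t_2) \le \Pr(C(s) > t_1) \Pr(C(s) > t_2), \]
since $\mu(\ball(s,u))$ is nondecreasing in $u$. Combined with Markov's inequality and the bound $\Ex C(s) \le c^*$, the same argument that yields (\ref{submult}) gives $\Pr(C(s) \ge t) \le \exp(1 - t/(ec^*))$. A union bound then yields $\Pr(\max_i C(s_i) \ge t) \le \min(1, N\exp(1 - t/(ec^*)))$, and integrating by splitting at the threshold $t_0 = ec^*(1+\log N)$ where the two expressions coincide gives $\Ex \max_i C(s_i) \le t_0 + ec^* = ec^*(2+\log N) \le ec^*(e+\log N)$. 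Adding $r = ac^*$ and dividing by $c^*$ produces $\Ex C/c^* \le a + e(e + \log \cov(ac^*))$, and minimizing over $a$ concludes.

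I do not anticipate a serious obstacle: the only mildly delicate ingredient is the coupling/triangle-inequality argument that $C \le \max_i C(s_i) + r$, which relies on the fact that the same ball which first covers $s_i$ continues to grow and therefore sweeps out all of $\ball(s_i,r)$ within an additional time $r$. Everything else is a standard union-bound-plus-tail-integration calculation, with the exponential tail guaranteed by the submultiplicativity already exploited in the proof of Proposition \ref{ECD}(b).
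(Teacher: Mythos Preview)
Your proposal is correct and follows essentially the same route as the paper: the deterministic bound $C \le r + \max_i C(s_i)$, the submultiplicativity-based exponential tail $\Pr(C(s)\ge t)\le \exp(1-t/(ec^*))$, a union bound over the $\cov(r)$ centers, and integration of the tail. Your choice of threshold $t_0 = ec^*(1+\log N)$ actually yields the slightly sharper constant $2$ in place of $e$, which you then relax to match the stated inequality; otherwise the arguments are identical.
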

\begin{proof}
As at (\ref{submult}) the submultiplicative property of $C(s)$ implies
$\Pr ( C(s)  \ge t) \le \exp( 1 - \sfrac{t}{e \Ex C(s)})$.
Applying this to the centers $(s_i)$ of $\cov(r)$ covering radius $r$ balls,
\[ \Pr ( \max_i C(s_i) \ge t) \le  e\ \cov(r)  \exp( - \sfrac{t}{e c^*} ). \]
Setting $t_0 := ec^* \log \cov(r)$,
\[ \Ex [\max_i C(s_i) ] = \int_0^\infty  \Pr ( \max_i C(s_i) \ge t) \; dt
\le t_0 + e \cdot ec^*. \]
 Because $C \le r + \max_i C(s_i)$ we have
 \[ 
 \Ex C \le r + ec^* (e + \log \cov(r)) . \]
 Setting $r = ac^*$ gives the stated bound.
\qed
\end{proof}

\subsection{The minimizing seed distribution}
 For the standardized growth model on connected compact $(S,\rho)$, take two points $s_1,s_2$ which are diametrically opposite, that is 
$\rho(s_1,s_2) = \Delta$. 
Then the maximum of $\Ex_{\mu} C$ over $\mu$ equals $1 + \Delta$,  attained by the measure $\mu$ degenerate at $s_1$.
But what can we say about  the {\em minimum} of $\Ex_{\mu} C$ over $\mu$?

Intuitively this should be related to the covering numbers $\cov(r)$ at (\ref{def:cov}).
And indeed there is a simple upper bound in terms of the covering numbers.  
Given $r$, consider $\mu$ uniform on the centers $(s_i, 1 \le i \le \cov(r))$ 
of the covering radius-$r$ balls.
Then $ C \le r + \tau_{\cov(r)}$ where $\tau_n$ is the 
elementary coupon collector time
with $\Ex \tau_n = n(1 + 1/2 + \ldots + 1/n) \le (1 + \log n) n$.
So we have established
\begin{Proposition}
In the standardized growth model, 
\[ \min_\mu \Ex_{\mu} C \le \min_{r>0} [r + \cov(r) (1 + \log \cov(r))] . \]
\label{P:sg1}
\end{Proposition}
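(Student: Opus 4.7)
The plan is to exhibit, for each fixed $r>0$, a specific $\mu$ whose cover time meets the claimed bound, and then take the infimum over $r$. First I invoke the definition of $\cov(r)$ to pick points $s_1,\ldots,s_N$, with $N := \cov(r)$, such that $\bigcup_{i=1}^N \ball(s_i,r)=S$. I then let $\mu$ be the uniform distribution on $\{s_1,\ldots,s_N\}$. The intuition is simple: once every $s_i$ has been hit by at least one seed, we only need to wait an extra time $r$ for each of those seeded balls to swallow its surrounding radius-$r$ covering ball.

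The second step formalizes this as a pathwise domination $C \le r + \tau^*$, where $\tau^*$ is the appropriate coupon-collector time. Under this $\mu$, the rate-$1$ Poisson stream of seeds splits into $N$ independent Poisson processes of rate $1/N$, one attached to each $s_i$; let $T_i$ be the first arrival at $s_i$ and set $\tau^* := \max_{i} T_i$. Because $v=1$, the ball that is seeded at $s_i$ at time $T_i$ has radius at least $r$ by time $T_i + r$, and hence contains $\ball(s_i,r)$ from that moment on. Since the $\ball(s_i,r)$ already cover $S$, we get $\XX(\tau^* + r) = S$ almost surely, giving the deterministic bound $C \le \tau^* + r$.

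The third step is the standard coupon-collector calculation: $\tau^*$ is the maximum of $N$ independent $\mathrm{Exp}(1/N)$ variables, so
\[
\Ex \tau^* \;=\; N\sum_{k=1}^N \tfrac{1}{k} \;\le\; N(1+\log N) \;=\; \cov(r)\,(1+\log \cov(r)).
\]
Combined with $C \le \tau^* + r$ this yields $\Ex_\mu C \le r + \cov(r)(1+\log\cov(r))$, and taking the infimum over $r>0$ gives the proposition. No step poses a real obstacle — the argument is a textbook coupon-collector coupling adapted to the growth model. The only bookkeeping point to watch is that the equivalence between the Poisson formulation (max of exponentials) and the discrete coupon-collector mean $NH_N$ works out cleanly precisely because we have standardized $\lambda=v=1$; without this, each factor would have to be tracked through the computation.
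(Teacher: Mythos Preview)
Your proof is correct and follows essentially the same approach as the paper: take $\mu$ uniform on the centres of an optimal radius-$r$ cover, bound $C \le r + \tau^*$ with $\tau^*$ the coupon-collector time, use $\Ex \tau^* = N H_N \le N(1+\log N)$, and minimize over $r$. Your write-up simply spells out in more detail (via Poisson thinning and the deterministic growth of seeded balls) why the pathwise inequality $C \le r + \tau^*$ holds.
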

For a bound in the opposite direction, observe first that for the Poisson counting process $(N(t), 0 \le t < \infty)$ of seed arrival times,
\begin{Lemma}
\label{L:NP}
If $t_0$ and $c_0$ are such that 
$\Pr(C > c_0) + \Pr(N(c_0) > t_0) < 1$
then
$\cov(c_0) \le t_0$.
\end{Lemma}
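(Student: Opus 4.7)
The plan is to use a probabilistic existence argument: the hypothesis that two "bad" events have total probability strictly less than $1$ guarantees the existence of at least one realization of the seed process that simultaneously covers $S$ by time $c_0$ and uses only few seeds, and the seeds in that realization then witness the claimed bound on $\cov(c_0)$.

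More precisely, I would consider the event
\[
A := \{C \le c_0\} \cap \{N(c_0) \le t_0\}.
\]
By the union bound and the hypothesis, $\Pr(A) \ge 1 - \Pr(C > c_0) - \Pr(N(c_0) > t_0) > 0$, so $A$ is non-empty. On $A$, the set $\XX(c_0)$ equals $S$, and by definition (\ref{XXdef}) (with $v=1$ in the standardized model)
\[
\XX(c_0) = \bigcup_{i : \tau_i \le c_0} \ball(\sigma_i, c_0 - \tau_i),
\]
which is a union of at most $N(c_0) \le t_0$ balls, each of radius $c_0 - \tau_i \le c_0$.

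Thus on $A$ the space $S$ admits a covering by at most $t_0$ balls of radius at most $c_0$; since such balls are contained in the corresponding balls of radius exactly $c_0$, this says $\cov(c_0) \le t_0$. Because $\cov(c_0)$ is deterministic and $A$ is non-empty, the inequality holds outright.

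There is no real obstacle here: the lemma is essentially a counting observation dressed in probabilistic language, and the key point is simply that every ball appearing in $\XX(c_0)$ has radius at most $c_0$, which is immediate from $\tau_i \ge 0$ in the standardized growth process. The probabilistic hypothesis is used only to guarantee the existence of at least one favorable sample path.
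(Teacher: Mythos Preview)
Your proof is correct and essentially identical to the paper's: both observe that the event $\{C \le c_0,\ N(c_0) \le t_0\}$ has positive probability and that on it the growth process exhibits a covering of $S$ by at most $t_0$ balls of radius at most $c_0$. The only cosmetic difference is that the paper reads off the covering at time $C$ (writing $\cov(c_0) \le N(C) \le N(c_0) \le t_0$) whereas you read it off at time $c_0$.
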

\begin{proof}
The assumption implies that the event $\{C \le c_0, N(c_0) \le t_0 \}$ has non-zero probability; 
on that event we have
\[ \cov(c_0) \le N(C) \le N(c_0) \le t_0 . \]
\qed
\end{proof}

Applying Lemma \ref{L:NP} with $c_0 = 3 \Ex C$ and $t_0 = 3c_0$ gives 
$\cov( 3 \Ex C) \le 9 \Ex C$.
This is true for any $\mu$ and so 
\begin{Proposition}
In the standardized growth model, 
\[ \min_\mu \Ex_{\mu} C \ge \min\{r : \cov(3r) \le 9r\}   . \]
\label{P:sg2}
\end{Proposition}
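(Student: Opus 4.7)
The plan is to apply Lemma \ref{L:NP} with the constants $c_0 = 3 \Ex C$ and $t_0 = 3 c_0 = 9 \Ex C$, exactly as the sentence preceding the statement suggests. The only thing to verify is the hypothesis $\Pr(C > c_0) + \Pr(N(c_0) > t_0) < 1$.

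First I would bound $\Pr(C > c_0)$ by Markov's inequality: with $c_0 = 3 \Ex C$, this is at most $1/3$. Second, I would bound $\Pr(N(c_0) > t_0)$. Because we are in the standardized growth model, $\lambda = 1$, so the Poisson count has mean $\Ex N(c_0) = c_0$; Markov's inequality then gives $\Pr(N(c_0) > 3 c_0) \le 1/3$. Summing, the hypothesis of Lemma \ref{L:NP} holds because $1/3 + 1/3 = 2/3 < 1$, and the lemma yields $\cov(3 \Ex C) \le 9 \Ex C$.

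This inequality says precisely that $r = \Ex_\mu C$ belongs to the set $\{r : \cov(3r) \le 9 r\}$, so $\Ex_\mu C \ge \min \{r : \cov(3r) \le 9 r\}$. Since the covering numbers $\cov(\cdot)$ depend only on the metric structure of $(S,\rho)$ and not on $\mu$, the right-hand side is independent of $\mu$, so taking the infimum over $\mu$ on the left gives the claimed bound.

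There is no real obstacle here; the only choice is the calibration of constants. Any pair $(\alpha,\beta)$ with $\alpha^{-1} + \beta^{-1} < 1$ would serve (yielding $\cov(\alpha \Ex C) \le \alpha\beta \, \Ex C$), and the symmetric choice $\alpha = \beta = 3$ produces the clean constants $(3,9)$ stated.
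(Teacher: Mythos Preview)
Your proof is correct and follows exactly the paper's approach: apply Lemma~\ref{L:NP} with $c_0 = 3\,\Ex C$ and $t_0 = 3c_0$, verifying the hypothesis via two applications of Markov's inequality, and then pass to the infimum over $\mu$. The paper merely states the choice of constants without spelling out the Markov bounds, so your argument is simply a fleshed-out version of the same proof.
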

Roughly speaking, if the space is $d$-dimensional in the sense that 
$\cov(r) \asymp (A/r)^d$ for fixed large $A$, then Propositions \ref{P:sg1} and \ref{P:sg2} imply that
$\min_\mu \Ex_{\mu} C $ is between orders $A^{\frac{d}{d+1}}$ and $A^{\frac{d}{d+1}} \log A$.

\subsection{Open problems for  the general growth model}
\label{sec:OPs}
\begin{itemize}
\item As mentioned above, can we find easily checkable conditions to ensure that $c^*/\Ex C \to 0$?
\item Can one improve the upper and lower bounds on $\min_\mu \Ex_\mu C$ above? 
In particular, can $\min_\mu \Ex_\mu C$ be more sharply related to some measure of {\em entropy}
of the metric space (see e.g. \cite{entropy}
 for possible notions of entropy)?
\item For $\mu$ attaining the minimum $\min_\mu \Ex_\mu C$, do we always have weak concentration?
That is, is there 
a function $\psi(\Delta) \downarrow 0$ as $\Delta \uparrow \infty$ such that on every connected compact 
metric space, for the standardized growth model,
\[ \var_\mu \left( \frac{C}{\Ex_\mu C} \right) \le \psi(\Delta)\]
 for the minimizing $\mu$?
 \item Is there an effective algorithmic procedure for finding a minimizing $\mu$? 
 This seems  loosely similar to the well-studied {\em k-median problem} \cite{k-median}.

 \item If $S$ is a compact group, with a metric invariant under the group action, then is the uniform (Haar) measure the minimizing measure?
\end{itemize}
Regarding the final problem above, it can be shown that, on the circle of integer circumference $L$, for the fixed-radius model with $r = 1/2$, 
the mean cover time for seed distribution $\mu$ uniform on $L$ evenly-spaced points is smaller than
that for $\mu$ uniform on the circle (the discrete analog is noted in  \cite{coupons_2020} Example 4.1).
We do not know if this type of example is a counter-example in the growth model; 
if so, replace by an asymptotic ($\Delta \to \infty$) conjecture.

\section{The growth process on the circle}
\label{sec:circle}
Here we consider the standardized growth model on the circle $S$ of circumference $L$, with uniform distribution $\mu$.
What is the $L \to \infty$ limit distribution of the cover time $C(L)$?
We will treat this as another example where the  {\em Poisson clumping heuristic} (PCH) \cite{me-PCH} gives a recipe for 
calculating explicitly the limit distribution; the method is {\em heuristic} in the sense of not justifying the approximations, 
but would provide a template for making a rigorous proof.

\subsection{The calculation}
Consider an interval $A \subset S$ of length $a \ll L$.
The number $N_A(t)$ of balls intersecting $A$ at time $t$ has Poisson distribution with
\[ \Ex N_A(t) = \int_0^t \min(a+2u,L) \ L^{-1} du . \]
We will use this only when $a$ and $t$ are order $L^{1/2 + o(1)}$ and so we can ignore the truncation.
This gives (the equalities below are really approximations)
\[ \Ex N_A(t) = (at + t^2)/L \]
\begin{equation}
 \Pr( N_A(t) = 0) = \exp(- (at + t^2)/L ) . 
 \label{PNA}
 \end{equation}
The probability $p(t)$ that a specified point $s \in S$ is not covered at time $t$ 
is the case $a = 0$, so
\begin{equation}
 p(t) = \exp( -t^2/L) .
 \label{circlept}
 \end{equation}
As $t$ approaches $C(L)$ the uncovered region is a union of intervals of lengths small relative to $L$.
The PCH asserts that,  as a good approximation which gives correct asymptotics,  
one can assume these intervals have i.i.d. lengths (with some distribution $\Lambda(t)$) and their centers
are as a Poisson process (of some rate $\lambda(t)$ per unit length); 
these quantities are related by
\[ p(t) = \lambda(t) \Ex \Lambda(t) . \]
The number of uncovered intervals therefore has Poisson distribution with mean $L \lambda(t)$
and so
\[ \Pr(C(L) \le t) = \exp( - L \lambda(t)) . \]
In this example it is easy to ascertain the distribution of $\Lambda(t)$.
Given that a point $s$ is uncovered, the conditional probability that the interval 
$[s - a_1,s+a_2]$ is uncovered (that is, is a subset of the uncovered interval containing $s$) equals,
by (\ref{PNA}),
$\exp( - (a_1+ a_2)t/L)$,
and therefore the whole uncovered interval $[s-A_1,s+A_2]$ is such that 
$A_1$ and $A_2$ are independent with Exponential($t/L$) distribution.
This length $A_1 + A_2$ is the size-biased distribution of $\Lambda(t)$, so its un-size-biased distribution is 
just Exponential($t/L$), with expectation
\[ \Ex \Lambda(t) = L/t . \]
Combining the displayed equations above gives us
\begin{equation}
\Pr(C(L) \le t) \approx  \exp( - t e^{-t^2/L}) 
\label{PCL}
\end{equation}
where we are now acknowledging that this is 
an approximation\footnote{For large $L$, and $t$ not in the tails.}, expected to lead to the correct asymptotics.

\subsection{Asymptotics}
And indeed (\ref{PCL})  corresponds, as one expects from general extreme value theory \cite{resnick}, to a limit result 
of the form
\begin{equation}
(C(L) - t_0(L))/\sigma(L) \to_d \zeta, \quad \Pr(\zeta \le x) = \exp(-e^{-x}), - \infty < x < \infty . 
\label{circle_limit}
\end{equation}
To make this explicit, define $G(y)$ to be the inverse function of $y = x \exp(-x^2)$ for large $x$ and small $y$
and then define
\begin{equation}
 t_0(L) := L^{1/2} G(L^{-1/2}) 
 \label{def:t0}
 \end{equation}
so that (\ref{PCL})  becomes 
\[ 
\Pr(C(L) \le t_0(L) ) \approx \exp( - 1) . 
\]
One  can now calculate from (\ref{PCL}) that for fixed $x$
\[
\Pr\left(C(L) \le t_0 + \sfrac{xL^{1/2}}{2G(L^{-1/2})}  \right) \approx \exp(-e^{-x}) 
\]
corresponding to (\ref{circle_limit})  with $t_0(L)$ defined by (\ref{def:t0}) and $\sigma(L)$ defined by
\[
\sigma(L) := \sfrac{L^{1/2}}{2G(L^{-1/2})}  .
\]
The function $G(L^{-1/2})$ is slowly varying, roughly as $\sqrt{\log L}$.

For comparison with the general result of Proposition \ref{P:growth},
note that from (\ref{circlept})
\[ c^* = \Ex C(s) =  \int_0^\infty \exp(-t^2/L) \ dt = \sfrac{1}{2} \pi^{1/2} L^{1/2} \]
and so 
\[ 
\var \left( \frac{C}{\Ex C} \right) \asymp \frac{1}{G^{4}(L^{-1/2})}, \quad
\frac{c^*}{\Ex C} \asymp \frac{1}{G(L^{-1/2})}. 
\]
The right side is the upper bound (Proposition \ref{P:growth}) and the left side is the correct order of magnitude.

\section{Discussion}
\label{sec:discuss}

\subsection{Comments on the two models} 
The fixed-radius model is a natural generalization of covering Euclidean space
with balls, though this generalization to metric spaces has apparently has not been studied before.
The growth model in our simple form has also apparently not been studied, though it 
can be regarded as an extremely basic 
model for the spread of information or the spread of an epidemic, 
a field with a huge literature studying models on graphs or Euclidean space
\cite{epi1,epi2,epi3}. 
A related growth model in two dimensions, where seeds arrive (instead of as a constant-rate process) as a Poisson process whose rate is the current occupied area, is studied in
\cite{me-gossip,durrett-gossip}.

\subsection{The growth model on other spaces}
In addition to the open problems in section \ref{sec:OPs},
there is much scope for further study of the growth model on specific spaces.
As well as other classical compact spaces familiar from analysis, 
one can consider a finite graph with edge lengths, with the metric of shortest route length.
Moreover there are {\em random} metric spaces of contemporary interest in probability, such as
the ``mean-field model of distance" \cite{PWIT},  
the Brownian CRT \cite{brownian_crt},
or the  Brownian map \cite{brownian_map}.

\subsection{Other uses of the two general bounds}
We have used  two general methods 
-- the random subset cover bound (Proposition \ref{P1})
and the monotonicity bound (Proposition \ref{L-simple-1}) -- 
which are in principle applicable in very general covering-like contexts 
to establish weak concentration bounds in general 
settings without calculating the expectation of the covering time.  
We provide some history of these methods below, and speculate that there may be other applications not yet explored.

{\bf The random subset cover bound},  Proposition \ref{P1}, for general i.i.d. random subsets of a set, 
was given in  \cite{me_threshold} as part of the proof of a weak concentration bound for
the Markov chain cover time  $C_{MC} $.
In the Markov chain context,  the i.i.d. subsets arise as excursions from a given state.
 In the result, the essential condition is that the maximum mean hitting time to any single state is $o(\Ex C_{MC} )$.
 In that sense the bound is closely analogous to the bounds in this article.
In the 30 years since \cite{me_threshold}, study of random walk cover times has entered a more 
sophisticated phase based on the discovery \cite{ding_peres}
of its connection with Gaussian free fields and Talagrand's theory of majorizing measures. 
In contrast, the program of using general results for i.i.d. random subsets as part of analysis 
of specific contexts within covering
seems not to have been developed until the recent work \cite{coupons_2020}.
That paper discusses known results in  combinatorial settings,
develops new general results and applies them to
several topics:
connectivity in random graphs;
covering a square with random discs;
covering the edges of a graph by spanning trees, and matroids by bases;
and random $k$-SAT. 

{\bf  The monotonicity bound}, Proposition \ref{L-simple-1}, was given in \cite{me-weakC} as a tool for establishing
weak concentration for first passage percolation times on general graphs.  
It was also used \cite{me-incipient} for weak concentration of the time of emergence of the 
giant component in bond percolation on general graphs.
Both contexts involve hitting time of an increasing set-valued Markov process, as does our application in section \ref{sec:PPg}.

 
 \end{document}